\newtheorem{Theorem}{Theorem}[section]
\newtheorem{Theoremx}{Theorem}
\newtheorem{Corollaryx}[Theoremx]{Corollary}
\newtheorem{Potential Theorem}[Theorem]{Potential Theorem}
\newtheorem{Lemma}[Theorem]{Lemma}
\newtheorem{Corollary}[Theorem]{Corollary}
\newtheorem*{Claim*}{Claim}
\theoremstyle{definition}
\newtheorem{Example}[Theorem]{Example}
\newtheorem{Definition}[Theorem]{Definition}
\newtheorem{Question}[Theorem]{Question}
\theoremstyle{remark}
\newtheorem{Remark}[Theorem]{Remark}
\newcommand{\Fnot}{\operatorname{F^\circ}}
\DeclareMathOperator{\Min}{Min}
\DeclareMathOperator{\Ass}{Ass}
\DeclareMathOperator{\Ext}{Ext}
\DeclareMathOperator{\coker}{coker}
\def\p{\mathfrak{p}}
\def\m{\mathfrak{m}}
\def\Z{\mathbb{Z}}
\renewcommand{\geq}{\geqslant}
\renewcommand{\leq}{\leqslant}
\DeclareMathOperator{\ann}{ann}
\newcommand{\ps}[1]{\llbracket {#1} \rrbracket}
\newcommand{\Att}{\operatorname{Att}}
\newcommand{\MinAtt}{\operatorname{MinAtt}}
\newcommand{\soc}{\operatorname{soc}}
\newcommand{\VV}{\mathbb{V}}
\begin{document}

\title{F-stable secondary representations and deformation of F-injectivity}
\author{Alessandro De Stefani}
\address{Dipartimento di Matematica, Universit{\`a} di Genova, Via Dodecaneso 35, 16146 Genova, Italy}
\email{destefani@dima.unige.it}
\author{Linquan Ma}
\address{Department of Mathematics, Purdue University, West Lafayette, IN 47907, USA}
\email{ma326@purdue.edu}

\thanks{The second author was supported by NSF Grant DMS \#1901672, NSF FRG Grant DMS \#1952366, and a fellowship from the Sloan Foundation.}

\begin{abstract}
We prove that deformation of F-injectivity holds for local rings $(R,\m)$ that admit secondary representations of $H^i_\m(R)$ which are stable under the natural Frobenius action. As a consequence, F-injectivity deforms when $(R,\m)$ is sequentially Cohen--Macaulay (or more generally when all the local cohomology modules $H^i_\m(R)$ have no embedded attached primes). We obtain some additional cases if $R/\m$ is perfect or if $R$ is $\mathbb{N}$-graded.
\end{abstract}

\maketitle

\section{Introduction}

Throughout this article, all rings are commutative, Noetherian, and with multiplicative identity. For rings containing a field of characteristic $p>0$, the seminal work of Hochster and Huneke on tight closure, and subsequent works of many others, has led to a systematic study of the so-called F-singularities. Roughly speaking, these are singularities that can be defined using the Frobenius endomorphism $F$: $R \to R$, which is the map that raises every element of $R$ to its $p$-th power. One of the most studied F-singularities is F-injectivity, which is defined in terms of injectivity of the natural Frobenius actions on the local cohomology modules $H^i_\m(R)$. It was first introduced and studied by Fedder in \cite{Fedder83}.

We say that a property $\mathcal{P}$ of local rings deforms if, whenever $(R,\m)$ is a local ring and $x \in \m$ is a nonzerodivisor such that $R/(x)$ satisfies $\mathcal{P}$, then $R$ satisfies $\mathcal{P}$. While this deformation problem for other classical F-singularities has been settled \cite{Fedder83, HoHu, SinghFReg, SinghFPureFReg}, whether F-injectivity deforms or not in general is still an open question. Fedder proved that F-injectivity deforms when $R$ is Cohen--Macaulay \cite[Theorem 3.4]{Fedder83}, and Horiuchi, Miller, Shimomoto proved that F-injectivity deforms either if $R/(x)$ is F-split \cite[Theorem 4.13]{HMS}, or if $H^i_\m(R/(x))$ has finite length for all $i \ne \dim(R)$ and $R/\m$ is perfect \cite[Theorem 4.7]{HMS}. More recently, the second author and Pham \cite{MaQuy} extended some of these results by further relaxing the assumptions on $R/(x)$.

In this paper, we consider secondary representations of the local cohomology modules $H^i_\m(R)$ (see subsection 2.2 for definitions and basic properties of secondary representations of Artinian modules). It seems natural to ask how the Frobenius action on $H^i_\m(R)$ interacts with a given secondary representation. Our first main result is that F-injectivity deforms when each local cohomology module $H^i_\m(R)$ admits a secondary representation which is stable under the natural Frobenius action (see Definition \ref{Defn F-stable} for details).

\begin{Theoremx}[Theorem \ref{mainThm1}] \label{THMX A}
Let $(R,\m)$ be a $d$-dimensional local ring of characteristic $p>0$ and let $x\in \m$ be a nonzerodivisor on $R$. Suppose for each $i \ne d$, $H^i_\m(R)$ has an F-stable secondary representation. If $R/(x)$ is F-injective, then $R$ is F-injective.
\end{Theoremx}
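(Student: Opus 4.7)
The plan is to show that $F \colon H^i_\m(R) \to H^i_\m(R)$ is injective for every $i$. Let $M := H^i_\m(R)$ and suppose $\eta \in M$ satisfies $F\eta = 0$; the goal is $\eta = 0$. The entry point is the long exact sequence associated to $0 \to R \xrightarrow{x} R \to R/(x) \to 0$, which yields a Frobenius-equivariant comparison map $\pi \colon H^i_\m(R) \to H^i_\m(R/(x))$. Since $R/(x)$ is F-injective (and $H^d_\m(R/(x))=0$ in the top case), $\pi(\eta)=0$, so $\eta \in xM$.

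For $i\neq d$, invoke the F-stable secondary representation $M = M_1 + \cdots + M_n$, where each $M_j$ is $\p_j$-secondary and $F$-stable. Because $M$ is Artinian, $x$ acts nilpotently on $M_j$ when $x\in \p_j$ and as an automorphism otherwise, yielding a Fitting-type direct sum decomposition $M = M^{\mathrm{ss}} \oplus M^{\mathrm{ni}}$ where $M^{\mathrm{ss}} = \bigcap_n x^n M = \sum_{x\notin \p_j} M_j$ and $M^{\mathrm{ni}} = \bigcup_n (0:_M x^n) = \sum_{x\in \p_j} M_j$; both factors are $F$-stable. Splitting $\eta = \eta^{\mathrm{ss}} + \eta^{\mathrm{ni}}$ and using $F$-stability gives $F\eta^{\mathrm{ss}} = F\eta^{\mathrm{ni}} = 0$, so it suffices to prove $F$-injectivity on each summand separately.

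For $\eta^{\mathrm{ni}}$, the element is $x$-torsion and already lies in $xM^{\mathrm{ni}}$; iterating the long exact sequence via the $F$-equivariant comparison maps associated to $R/(x^k)\to R/(x^{k-1})$ and appealing repeatedly to F-injectivity of $R/(x)$ should force $\eta^{\mathrm{ni}}=0$. For $\eta^{\mathrm{ss}}$, where $x$ is already invertible, the naive iteration $\eta^{\mathrm{ss}} = x^k\mu_k$ does not terminate, so one re-runs the argument on $M^{\mathrm{ss}}$ after replacing $x$ by a new element $y \in \m$ chosen by prime avoidance to lie in a chosen attached prime of $M^{\mathrm{ss}}$, using that the F-stable secondary representation restricts. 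This $\eta^{\mathrm{ss}}$ step is the principal obstacle: the sum-not-direct-sum nature of secondary decomposition blocks a naive induction on the number of attached primes, and the F-stable hypothesis must be leveraged carefully to control the interaction between the replacement element and Frobenius. Finally, the case $i=d$, for which no hypothesis on $H^d_\m(R)$ is made, is handled using the F-injectivity of $H^{d-1}_\m(R)$ just established together with the top-degree sequence $H^{d-1}_\m(R/(x)) \to H^d_\m(R) \xrightarrow{x} H^d_\m(R) \to 0$.
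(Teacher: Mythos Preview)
Your proposal has two genuine gaps.

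First, the ``Fitting-type direct sum decomposition'' $M = M^{\mathrm{ss}} \oplus M^{\mathrm{ni}}$ is false for Artinian modules. Over $R=k\ps{x}$ take $M=E(k)\oplus k$ with the irredundant secondary representation $M_1=E(k)\oplus 0$ (attached prime $(0)$) and $M_2=\soc(E(k))\oplus k$ (attached prime $(x)$). Then $\sum_{x\notin\p_j}M_j=M_1$ and $\sum_{x\in\p_j}M_j=M_2$ intersect in $\soc(E(k))\oplus 0\neq 0$, while $\bigcap_n x^nM=E(k)\oplus 0$ and $\bigcup_n(0:_M x^n)=M$ also intersect nontrivially. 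Neither identification you wrote holds, and neither version of the sum is direct. Since secondary representations are sums rather than direct sums, splitting $\eta$ into F-stable pieces $\eta^{\mathrm{ss}}+\eta^{\mathrm{ni}}$ and treating them independently is not justified.

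Second, even granting the splitting, your $\eta^{\mathrm{ss}}$ step is not a proof: you yourself call it the principal obstacle, and the suggested fix of replacing $x$ by some $y$ chosen by prime avoidance cannot work, because the only hypothesis available is that $R/(x)$ is F-injective, not $R/(y)$. There is no mechanism in your outline that ever forces $\eta^{\mathrm{ss}}=0$.

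The paper avoids both problems by proving something stronger, by induction on $i$: that multiplication by $x$ is \emph{surjective} on $H^i_\m(R)$ and that the twisted operator $x^{p^e-1}F^e$ is injective on $H^i_\m(R)$ for all $e>0$. The twist is essential: it is $x^{p^e-1}F^e$, not $F$, that makes the square
\[
\xymatrix{
H^i_\m(R)\ar[r]^{\cdot x}\ar[d]_{x^{p^e-1}F^e} & H^i_\m(R)\ar[d]^{F^e}\\
H^i_\m(R)\ar[r]^{\cdot x} & H^i_\m(R)
}
\]
commute, so that a socle element killed by $x^{p^e-1}F^e$ comes from $H^{i-1}_\m(R/(x))$ and dies by F-injectivity of $R/(x)$. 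The F-stable hypothesis is then used not to decompose $H^i_\m(R)$ but to derive a contradiction: if some attached prime $\p_j$ contains $x$, the corresponding F-stable component $W_j$ satisfies $x^{p^e-1}F^e(W_j)\subseteq x^{p^e-1}W_j=0$ for $e\gg 0$, contradicting injectivity of $x^{p^e-1}F^e$. Hence no attached prime contains $x$, so $x$ is surjective on $H^i_\m(R)$, and the induction (including the case $i=d$) goes through. In particular the paper shows there is no ``$M^{\mathrm{ni}}$ part'' at all, which is why your dichotomy never arises.
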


We prove that secondary components that correspond to minimal attached primes of $H^i_\m(R)$ are always F-stable, see Lemma \ref{Lem.FrobArtinian}. As a consequence, F-injectivity deforms when the attached primes of $H_\m^i(R)$ are all minimal, see Corollary \ref{Corollary minimal} for a slightly stronger statement. In particular, we obtain the following:

\begin{Corollaryx}[Corollary \ref{CorSeqCM}] \label{CORX B}
Let $(R,\m)$ be a $d$-dimensional sequentially Cohen--Macaulay local ring of characteristic $p>0$ and let $x\in \m$ be a nonzerodivisor on $R$. If $R/(x)$ is F-injective, then $R$ is F-injective.
\end{Corollaryx}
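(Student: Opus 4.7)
The plan is to reduce Corollary \ref{CORX B} to Corollary \ref{Corollary minimal} by verifying its hypothesis: that for every $i \ne d$, each attached prime of $H^i_\m(R)$ is minimal in $\Att_R H^i_\m(R)$. Combined with Lemma \ref{Lem.FrobArtinian}, this will produce the F-stable secondary representations required to invoke Theorem \ref{THMX A}.

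First I would invoke the dimension filtration characterizing sequential Cohen--Macaulayness: a chain of submodules $0 = M_0 \subsetneq M_1 \subsetneq \cdots \subsetneq M_s = R$ such that each quotient $N_j := M_j/M_{j-1}$ is Cohen--Macaulay of some dimension $d_j$, with $0 \le d_1 < d_2 < \cdots < d_s = d$. Then I would feed the short exact sequences $0 \to M_{j-1} \to M_j \to N_j \to 0$ into the long exact sequence in local cohomology. Because $H^i_\m(N_j) = 0$ for $i \ne d_j$, and because the strictness $d_1 < \cdots < d_s$ ensures that inductively $H^i_\m(M_{j-1}) = 0$ whenever $i \ge d_j$, a straightforward induction on $j$ yields
\[
H^i_\m(R) \;\cong\;
\begin{cases} H^i_\m(N_j) & \text{if } i = d_j \text{ for some } j, \\ 0 & \text{otherwise.}\end{cases}
\]

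With this description in hand, I would appeal to the standard fact that for a Cohen--Macaulay module $N$ of dimension $i$ one has $\Att_R H^i_\m(N) = \Assh_R N$, so every attached prime of the nonzero module $H^i_\m(R) \cong H^i_\m(N_j)$ has dimension exactly $i = d_j$. In particular these primes are pairwise incomparable, hence each is minimal among $\Att_R H^i_\m(R)$. Applying Corollary \ref{Corollary minimal} then concludes the argument.

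I do not foresee a genuine obstacle: the entire content is to translate sequential Cohen--Macaulayness, via the dimension filtration, into the absence of embedded attached primes in each $H^i_\m(R)$. The only point requiring care is the inductive extraction of $H^i_\m(R)$ from the filtration—specifically, verifying that both $H^{d_j}_\m(M_{j-1})$ and $H^{d_j+1}_\m(M_{j-1})$ vanish so that the connecting maps produce an honest isomorphism $H^{d_j}_\m(M_j) \cong H^{d_j}_\m(N_j)$; this follows immediately from the strict inequalities $d_k < d_j$ for $k < j$.
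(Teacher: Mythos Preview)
Your argument is correct and reaches the same endpoint as the paper---namely, that each $H^i_\m(R)$ has no embedded attached primes, so Corollary~\ref{Corollary minimal} applies---but the route is genuinely different. The paper first completes $R$, writes it as a quotient of a regular local ring $S$, invokes the Herzog--Sbarra characterization that sequential Cohen--Macaulayness is equivalent to each $\Ext^{\dim S-i}_S(R,S)$ being zero or Cohen--Macaulay of dimension $i$, and then uses Matlis duality (Remark~\ref{Remark Ass-Att}) to translate ``no embedded associated primes of $\Ext$'' into ``no embedded attached primes of $H^i_\m(R)$''. You instead work directly with the dimension filtration, split the long exact sequence to identify $H^i_\m(R)\cong H^{d_j}_\m(N_j)$, and conclude via the elementary fact $\Att_R H^{d_j}_\m(N_j)=\Assh_R N_j$ for a Cohen--Macaulay module $N_j$. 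Your approach is more self-contained---it avoids the passage to the completion, the external Herzog--Sbarra theorem, and Matlis duality---and it yields the bonus of an explicit description of each $H^i_\m(R)$. The paper's approach, on the other hand, highlights the structural link with the deficiency modules $\Ext^{\dim S-i}_S(R,S)$, which is conceptually illuminating even if less elementary.
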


We can further relax our assumptions if either the residue field of $R$ is perfect, or if $R$ is $\mathbb{N}$-graded over a field, by only putting conditions on those secondary components of $H^i_\m(R)$ whose attached primes are not equal to $\m$. We refer to Definition \ref{Defn Fnot-stable} for the precise meaning of $\Fnot$-stable secondary representations.

\begin{Theoremx}[Theorem \ref{mainThm2} and Theorem \ref{mainThmGraded}] \label{THMX B}
Let $(R,\m, k)$ be a $d$-dimensional local ring of characteristic $p>0$ that is either local with perfect residue field or $\mathbb{N}$-graded over a field $k$, and let $x\in \m$ be a nonzerodivisor on $R$ (homogeneous in the graded case). Suppose for each $i\ne d$, $H^i_\m(R)$ has an $\Fnot$-stable secondary representation. If $R/(x)$ is F-injective, then $R$ is F-injective.
\end{Theoremx}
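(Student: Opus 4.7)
The plan is to emulate the proof of Theorem~\ref{mainThm1}, decomposing $H^i_\m(R)$ according to its secondary representation and treating each piece separately, while using the extra hypothesis (perfect residue field or $\mathbb{N}$-grading) to handle the $\m$-secondary component, which is the only part not controlled by $\Fnot$-stability. First I would set up the deformation machinery: from the short exact sequence $0 \to R \xrightarrow{x} R \to R/(x) \to 0$, the long exact sequence in local cohomology fits into Frobenius-compatible diagrams (with the usual twist $F\circ \delta = x^{p-1}\delta\circ F$ on the connecting map), and the goal becomes proving injectivity of $F$ on $H^i_\m(R)$ for every $i<d$, given F-injectivity of $R/(x)$.

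Using the given $\Fnot$-stable secondary representation, I would write $H^i_\m(R)=M+N$, where $M$ is the $\m$-secondary summand and $N$ is the sum of the secondary components whose attached primes are strictly contained in $\m$. By hypothesis $F(N)\subseteq N$, so the argument of Theorem~\ref{mainThm1} applied component-by-component to $N$ (treating separately the cases $x\in\p$ and $x\notin\p$ for each attached prime $\p$ of a component: when $x\notin \p$, multiplication by $x$ is an automorphism so the component contributes nothing modulo $x$; when $x\in\p\ne\m$, lift a purported Frobenius kernel through the connecting map and invoke F-injectivity of $R/(x)$) shows that $F$ is injective on $N$. The remaining task is therefore to rule out Frobenius-killed elements concentrated in $M$.

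The crux is the handling of $M$. Since $\m$ (and in particular $x$) acts nilpotently on $M$, the module $M$ has finite length and can be analyzed through its socle $(0:_M \m)$, a $k$-vector space on which Frobenius acts semi-linearly. In the perfect residue field case, perfectness of $k$ makes this action as injective as possible: any $F$-killed element of $M$ descends, via Matlis duality (or a direct socle argument combined with the fact that $F$ on a $k$-vector space with $k$ perfect is injective iff it is pointwise so), to an $F$-killed socle element arising from $H^{i-1}_\m(R/(x))$, which vanishes by F-injectivity of $R/(x)$. In the $\mathbb{N}$-graded case, $M$ is concentrated in a bounded range of degrees, and Frobenius multiplies degrees by $p$; a direct graded analysis of the top-degree elements of $M$, together with F-injectivity of the graded ring $R/(x)$, forces injectivity of $F$ on $M$.

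The main obstacle is this last step. The component $M$ is not assumed F-stable, so $F(M)$ may have contributions inside $N$, and the decomposition $H^i_\m(R)=M+N$ is not preserved by the Frobenius. One must therefore carefully track the mixing between $M$ and $N$ under Frobenius, and the perfect residue field or $\mathbb{N}$-graded hypothesis is exactly what makes this interaction controllable, either through a semi-linear Nakayama-style argument over $k$ or through a degree obstruction. Additionally, the inherent non-uniqueness of secondary representations, in particular the freedom in choosing $M$, requires care in verifying that the argument does not depend on the representative chosen.
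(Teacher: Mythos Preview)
Your plan has a genuine structural gap. The decomposition $H^i_\m(R)=M+N$ is neither direct nor Frobenius-stable (as you yourself note, $F(M)\not\subseteq M$), so even if you could establish that $F$ is injective on $M$ and on $N$ separately, this would not yield injectivity on $H^i_\m(R)$: a nonzero $u=m+n$ with $F(u)=0$ only gives $F(m)=-F(n)\in N$, and no contradiction follows. Moreover, the phrase ``apply the argument of Theorem~\ref{mainThm1} component-by-component to $N$'' is not meaningful: that argument is an induction on $i$ through the long exact sequence of local cohomology and operates on the entire $H^i_\m(R)$, not on a secondary submodule. In particular your treatment of the case $x\in\p\ne\m$ (``lift a Frobenius kernel through the connecting map'') has no content; the point in the paper is rather that such a component, being F-stable with $x$ nilpotent on it, would be annihilated by $x^{p^e-1}F^e$ for $e\gg 0$, contradicting injectivity of the latter on all of $H^i_\m(R)$---hence no such component exists.

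The paper's proofs of Theorems~\ref{mainThm2} and \ref{mainThmGraded} never split $H^i_\m(R)$ into $M$ and $N$. Instead one proves by induction on $i$ the pair of statements: (a) $x^{p^e-1}F^e$ is injective on the whole $H^i_\m(R)$; (b) $L_i:=\coker\bigl(H^i_\m(R)\xrightarrow{x}H^i_\m(R)\bigr)$ has finite length (and, in the graded case, is in fact zero). The perfect residue field enters not via the socle of $M$, but to ensure that the injective Frobenius on the finite-length module $L_{i-1}\subseteq H^{i-1}_\m(R/(x))$ is a bijection, so that $F^e$ descends injectively to $H^{i-1}_\m(R/(x))/L_{i-1}$; then the diagram chase from Theorem~\ref{mainThm1} gives (a). Only after (a) is in hand does the secondary representation appear, and solely to rule out components with attached prime in $\VV(x)\smallsetminus\{\m\}$, yielding (b). In the graded case one then observes $L_i$ is concentrated in degree $0$, and a degree argument with the injective map $x^{p^e-1}F^e$ forces $L_i=0$.
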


\subsection*{Acknowledgments} We thank Pham Hung Quy and Ilya Smirnov for several useful discussions on the topics of this article.

\section{Preliminaries} \label{Section preliminaries}

\subsection{Frobenius actions on local cohomology and F-injectivity}

Let $R$ be a ring of characteristic $p>0$. A Frobenius action on an $R$-module $W$ is an additive map $F$: $W \to W$ such that $F(r\eta) = r^pF(\eta)$ for all $r \in R$ and $\eta \in W$.

Let $I=(f_1,\dots, f_n)$ be an ideal of $R$, then we have the \v{C}ech complex:
$$
C^\bullet(f_1,\dots,f_n; R):= 0\to R\to \oplus_i R_{f_i}\to \cdots\to R_{f_1f_2\cdots f_n}\to 0.
$$
Since the Frobenius endomorphism on $R$ induces the Frobenius endomorphism on all localizations of $R$, it induces a natural Frobenius action on $C^\bullet(f_1,\dots,f_n; R)$, and hence it induces a natural Frobenius action on each $H_I^i(R)$. In particular, there is a natural Frobenius action $F$: $H^i_\m(R) \to H^i_\m(R)$ on each local cohomology module of $R$ supported at a maximal ideal $\m$. A local ring $(R,\m)$ is called F-injective if $F$: $H^i_\m(R) \to H^i_\m(R)$ is injective for all $i$.

\subsection{Secondary representations} We recall some well-known facts on secondary representations that we will use throughout this article. For unexplained facts, or further details, we refer the reader to \cite[Section 7.2]{BrodmannSharp}.

\begin{Definition}
Let $R$ be a ring. An $R$-module $W$ is called secondary if $W \neq 0$ and for each $x \in R$ the multiplication by $x$ map on $W$ is either surjective or nilpotent.
\end{Definition}

One can easily check that, if $W$ is a secondary $R$-module, then $\p = \sqrt{\ann_R(W)}$ is a prime ideal, and $\ann_R(W)$ is $\p$-primary.

\begin{Definition}
Let $R$ be a ring and $W$ be an $R$-module. A secondary representation of $W$ is an expression of $W$ as a sum of secondary submodules, $W = \sum_{i=1}^t W_i$, where each $W_i$ is called a secondary component of this representation.

A secondary representation of $W$ is called irredundant if the prime ideals $\p_i = \sqrt{\ann_R(W_i)}$ are all distinct and none of the summands $W_i$ can be removed from the sum. The set $\{\p_1,\ldots,\p_t\}$ is independent of the irredundant secondary representation and is called the set of attached primes of $W$, denoted by $\Att_R(W)$.
\end{Definition}


Clearly a secondary module has a unique attached prime. Moreover, over a local ring $(R,\m)$, if a nonzero module $W$ has finite length, then $W$ is secondary with $\Att_R(W) = \{\m\}$. A key fact is that every Artinian $R$-module admits an irredundant secondary representation. In particular, all local cohomology modules $H^i_\m(R)$ have an irredundant secondary representation.


\begin{Remark} \label{Remark Ass-Att} When $(R,\m)$ is a complete local ring, Matlis duality induces a correspondence between (irredundant) secondary representations of Artinian modules and (irredundant) primary decompositions of Noetherian modules. In particular, if $(R,\m)$ is complete, and $S$ is an $n$-dimensional regular local ring mapping onto $R$, then $\Att_R(H^i_\m(R)) = \Ass_R(\Ext^{n-i}_S(R,S))$, as the Matlis dual of $H^i_\m(R)$ is isomorphic to $\Ext^{n-i}_S(R,S)$.
\end{Remark}

We conclude this section by recalling the definition of surjective element and strictly filter regular element.

\begin{Definition} \label{Defn surjective} Let $(R,\m)$ be a local ring of dimension $d$. An element $x \in \m$ is called a surjective element if $x \notin \p$ for all $\p\in \bigcup_{i=0}^d \Att_R(H^i_\m(R))$, and $x$ is called a strictly filter regular element if $x \notin \p$ for all $\p\in \left(\bigcup_{i=0}^d \Att_R(H^i_\m(R))\right) \smallsetminus \{\m\}$.
\end{Definition}

\begin{Remark}
\begin{enumerate}
  \item The definition of surjective element we give is not the original one introduced by Horiuchi-Miller-Shimomoto in \cite{HMS}. However, note that $\Ass_R(R)\subseteq \cup_{i=0}^{\dim(R)}\Att_R(H_\m^i(R))$ by \cite[11.3.9]{BrodmannSharp} and thus surjective elements are always nonzerodivisors. Moreover, it follows from the definition that $x$ is a surjective element if and only if $H_\m^i(R)\xrightarrow{\cdot x}H_\m^i(R)$ is surjective for each $i$. Therefore our definition is equivalent to the original definition of surjective element by \cite[Proposition 3.3]{MaQuy}.
  \item The definition of strictly filter regular element was originally introduced by Cuong-Morales-Nhan in \cite{CuongMoralesNhan}. It is easy to see that $x$ is a strictly filter regular element if and only if $\coker(H_\m^i(R)\xrightarrow{x}H_\m^i(R))$ has finite length for each $i$.
\end{enumerate}

\end{Remark}

Surjective elements are important in the study of the deformation problem for F-injectivity. For instance, it was first proved in \cite[Theorem 3.7]{HMS} that if $R/(x)$ is F-injective and $x$ is a surjective element, then $R$ is F-injective (see also \cite[Corollary 3.8]{MaQuy} or the proof of Theorem \ref{mainThm1} in the next section). In fact, we do not know any example that $R/(x)$ is F-injective but $x$ is not a surjective element, see Question \ref{q3}.


\section{F-stable secondary representation}

We introduce the key concept of this article.

\begin{Definition} \label{Defn F-stable} Let $R$ be a ring of characteristic $p>0$, and let $W$ be an $R$-module with a Frobenius action $F$. We say that $W$ admits an F-stable secondary representation if there exists a secondary representation $W = \sum_{i=1}^t W_i$ such that each $W_i$ is F-stable, i.e., $F(W_i) \subseteq W_i$ for all $i$.
\end{Definition}

Observe that, even though we are not explicitly asking that the F-stable secondary representation is irredundant, this can always be achieved, whenever such a representation exists. It seems natural to ask when a secondary component of an Artianina module is F-stable, we show this is always the case for secondary components whose attached primes are minimal in the set of all attached primes.

\begin{Lemma}
\label{Lem.FrobArtinian}
Let $R$ be a ring of characteristic $p>0$, and let $W$ be an Artinian $R$-module with a Frobenius action $F$. Let $W=\sum_{i=1}^t W_i$ be an irredundant secondary representation, with $\p_i=\sqrt{\ann_R(W_i)}$. If $\p_i\in\MinAtt_R(W)$, then $W_i$ is F-stable.

In particular, if $(R,\m)$ is a local ring of characteristic $p>0$ and dimension $d$, then $H_\m^d(R)$ has an F-stable secondary representation.
\end{Lemma}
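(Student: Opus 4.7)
The plan is to exhibit an element $x \in R$ and an integer $n \geq 1$ such that $W_i = x^n W$, and then use the defining property of the Frobenius action to conclude that $F(W_i) \subseteq W_i$. Without loss of generality assume $i = 1$.

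First I would use the minimality of $\p_1$ to produce the right element. Since $\p_1 \in \MinAtt_R(W)$, we have $\p_j \not\subseteq \p_1$ for every $j \neq 1$; hence $\bigcap_{j \neq 1} \p_j \not\subseteq \p_1$ by prime avoidance. Pick $x \in \bigcap_{j\neq 1} \p_j$ with $x \notin \p_1$. Because each $W_j$ with $j \neq 1$ is secondary with $\sqrt{\ann_R(W_j)} = \p_j \ni x$, multiplication by $x$ is nilpotent on each such $W_j$; choose $n$ large enough that $x^n W_j = 0$ for all $j \neq 1$. On the other hand, since $x \notin \p_1 = \sqrt{\ann_R(W_1)}$ and $W_1$ is secondary, multiplication by $x$ is surjective on $W_1$, so $x^n W_1 = W_1$.

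Combining these, I obtain
\[
x^n W \;=\; x^n W_1 + \sum_{j \neq 1} x^n W_j \;=\; W_1,
\]
which realizes $W_1$ as the image of multiplication by $x^n$ on $W$. Now for any $w \in W$ the Frobenius relation gives $F(x^n w) = x^{np} F(w) \in x^{np} W$, so
\[
F(W_1) \;=\; F(x^n W) \;\subseteq\; x^{np} W \;=\; x^{n(p-1)} \bigl( x^n W \bigr) \;=\; x^{n(p-1)} W_1 \;\subseteq\; W_1,
\]
which is the desired F-stability.

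The only real point to be careful about is the first step: one needs that minimality of $\p_1$ in $\Att_R(W)$, together with irredundancy of the representation, forces $\p_j \not\subseteq \p_1$ for all $j \neq 1$ (so that prime avoidance applies). Once the element $x$ and exponent $n$ with $W_1 = x^n W$ are in hand, the proof is immediate from $F(r\eta) = r^p F(\eta)$; no delicate argument about individual attached primes is required, and in particular we do not need $R$ to be local or complete, nor do we invoke Matlis duality.
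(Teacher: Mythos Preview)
Your proof is correct and follows essentially the same route as the paper's: pick $y\in\bigcap_{j\neq 1}\p_j\smallsetminus\p_1$, use secondariness to get $y^NW=W_1$ for $N\gg0$, and then apply $F(r\eta)=r^pF(\eta)$ to conclude $F(W_1)\subseteq y^{pN}W=W_1$. The only quibble is terminological: the step ``$\bigcap_{j\neq 1}\p_j\not\subseteq\p_1$'' is not really prime avoidance but just the fact that a prime containing an intersection of ideals must contain one of them.
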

\begin{proof}
Since $\p_i\in\Min\Att(W)$, we can pick $y\in \cap_{j\neq i}\p_j$ but $y\notin \p_i$. Then $yW_i=W_i$ and $y^NW_j=0$ for all $j\neq i$ and $N\gg0$. Therefore we have $y^NW=W_i$ for all $N\gg0$, and thus $F(W_i)=F(y^NW_i)\subseteq F(y^NW)=y^{pN}F(W)\subseteq y^{pN}W=W_i$.

The last conclusion follows since it is well-known that $\Att_R(H_\m^d(R))=\{\p | \dim(R/\p)=d\}$, see \cite[Theorem 7.3.2]{BrodmannSharp}, in particular, $\Att_R(H_\m^d(R))=\MinAtt_R(H_\m^d(R))$.
\end{proof}

For secondary components whose attached primes are not necessarily minimal, the corresponding secondary components may not be F-stable. However, we do not know whether this can happen when $W$ is a local cohomology module with its natural Frobenius action, see Question \ref{q1}.

\begin{Example}
Let $R=\mathbb{F}_p\ps{x,y}$ and let $W=\mathbb{F}_p\oplus H_\m^2(R)$. Consider the Frobenius action $F$ on $W$ that sends $(1,0)$ to $(1,x^{-p}y^{-1})$ and is the natural one on $H_\m^2(R)$. Then $F$ is injective on $W$, but we claim that $H^2_\m(R)$ is the only proper nontrivial F-stable submodule of $W$. Indeed, let $0 \neq W'$ be an F-stable submodule of $W$, it is enough to show that $0 \oplus H^2_\m(R) \subseteq W'$. Choose $a = (b, c) \neq 0$ inside $W'$. If $c = 0$, then $b \neq 0$. By replacing $a$ with $F(a)$, we can assume that $c \neq 0$. Note that $yF(a) = yF(b, 0) + (0, yF(c)) = (0, yF(c)) \neq 0$ since the action $yF: H^2_\m(R) \to H^2_\m(R)$ is injective. Moreover $H^2_\m(R)$ is simple as an $R$-module with a Frobenius action, so $0 \oplus H^2_\m(R) \subseteq W'$.  Since $W$ is not secondary, this implies that there is no secondary representation of $W$ which is stable with respect to the given Frobenius action (any secondary component with attached prime $\m$ is not F-stable).
\end{Example}

We let $\VV(x)$ denote the set of primes of $R$ which contain $x$. Our first main result is the following.

\begin{Theorem} \label{mainThm1}
Let $(R,\m)$ be a $d$-dimensional local ring of characteristic $p>0$ and let $x\in \m$ be a nonzerodivisor on $R$. Suppose for each $i \ne d$, $H^i_\m(R)$ admits a secondary representation in which the secondary components whose attached primes belong to $\VV(x)$ are F-stable (e.g., $H_\m^i(R)$ has an F-stable secondary representation). If $R/(x)$ is F-injective, then $x$ is a surjective element and $R$ is F-injective.
\end{Theorem}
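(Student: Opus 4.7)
The plan is to first show that $x$ is a surjective element and then to conclude F-injectivity of $R$ via the standard deformation argument of \cite[Corollary 3.8]{MaQuy}. As a preliminary, I would verify that the connecting map $\delta_i\colon H^{i-1}_\m(R/(x))\to H^i_\m(R)$ and the reduction map $p_i\colon H^i_\m(R)\to H^i_\m(R/(x))$ in the long exact sequence from $0\to R\xrightarrow{x}R\to R/(x)\to 0$ commute with the natural Frobenius actions; this is a routine \v{C}ech-complex verification, comparing with the SES $0\to R\xrightarrow{x^p}R\to R/(x^p)\to 0$ and using that the natural Frobenius on $R/(x)$ factors as $R/(x)\to R/(x^p)\twoheadrightarrow R/(x)$.

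To establish $x$-surjectivity on each $H^i_\m(R)$, I would induct on $i$. The cases $i=0$ (trivial since $H^0_\m(R)=0$ as $x$ is a nonzerodivisor) and $i=d$ (automatic from $H^d_\m(R/(x))=0$ as $\dim R/(x)=d-1$) are immediate. For $0<i<d$, assume $x$ is surjective on $H^j_\m(R)$ for $j<i$; then $\delta_i$ is injective and yields a Frobenius-compatible isomorphism $H^{i-1}_\m(R/(x))\cong 0:_{H^i_\m(R)}x$, so F-injectivity of $R/(x)$ gives F-injectivity on $0:_{H^i_\m(R)}x$, and a standard induction on $k$ using $F(x\xi)=x^pF(\xi)$ extends this to F-injectivity on the full $x$-torsion $\bigcup_k 0:_{H^i_\m(R)}x^k$. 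Setting $U_i=\sum_{\p_j\in\VV(x)}W_j$ and $V_i=\sum_{\p_j\notin\VV(x)}W_j$, we have $U_i$ F-stable by hypothesis and $x$-nilpotent, so $F$ is injective on $U_i$. A key observation is that $U_i\ne 0$ forces $U_i\not\subseteq xH^i_\m(R)$: since $xV_i=V_i$, one has $xH^i_\m(R)=xU_i+V_i$, and $U_i\subseteq xU_i+V_i$ combined with the $x$-nilpotence of $U_i$ would yield $U_i\subseteq V_i$ and hence $H^i_\m(R)=V_i$, contradicting the uniqueness of the attached primes of $H^i_\m(R)$.

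Thus when $U_i\ne 0$ the Frobenius-compatible composition $U_i\twoheadrightarrow U_i/(U_i\cap xH^i_\m(R))\hookrightarrow H^i_\m(R/(x))$ is nonzero. The main obstacle is to convert this into a contradiction with F-injectivity of $R/(x)$; I expect this step to use a careful analysis of how Frobenius iterates on the F-stable $U_i$ interact with the $x$-torsion filtration and the embedding into $H^i_\m(R/(x))$, possibly combined with Lemma~\ref{Lem.FrobArtinian} (which ensures that secondary components with minimal attached primes are automatically F-stable) to peel off such components and induct on the number of attached primes. Once $x$-surjectivity is established, F-injectivity of $R$ follows from \cite[Corollary 3.8]{MaQuy}: F-injectivity on the $x$-torsion is already known, and on the quotient $H^i_\m(R)/(0:_{H^i_\m(R)}x^\infty)$ multiplication by $x$ is an automorphism, whence Artinianness together with $1-xr$ being a unit in $R$ forces F-injectivity there, and gluing via the short exact sequence completes the argument.
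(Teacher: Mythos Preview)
Your plan is close in spirit to the paper's argument, and several of the pieces are correct: the inductive setup on $i$, the identification $0:_{H^i_\m(R)}x\cong H^{i-1}_\m(R/(x))$ via the connecting map once $x$ is surjective on $H^{i-1}_\m(R)$, and the filtration argument extending injectivity from $0:x$ to the full $x$-torsion. Note, incidentally, that $H^i_\m(R)$ is $\m$-torsion and hence already equal to its $x$-torsion, so your final paragraph is superfluous and the quotient you consider there is zero.

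The genuine gap is exactly where you flag it: you have no mechanism to force $U_i=0$. Knowing that $F$ is injective on the F-stable, $x$-nilpotent submodule $U_i$ is not a contradiction; nor does the nonzero Frobenius-compatible map $U_i\to H^i_\m(R/(x))$ help, since $F$ is injective on both source and target. Your suggested ``peeling off'' via Lemma~\ref{Lem.FrobArtinian} does not create any useful induction, because F-stability of the relevant components is already assumed and there is nothing in the hypotheses that decreases after removing a component.

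The missing idea is to work with the \emph{twisted} Frobenius $x^{p^e-1}F^e$ rather than $F$ itself. One small correction: the connecting map $\delta$ does not literally commute with $F$; rather $\delta\circ F^e = x^{p^e-1}F^e\circ\delta$. This is precisely what the paper's commutative diagram encodes, and it yields that $x^{p^e-1}F^e$ is injective on $0:_{H^i_\m(R)}x$, hence (by the same socle argument, or by your filtration trick applied to $x^{p^e-1}F^e$ in place of $F$) injective on all of $H^i_\m(R)$. Now the contradiction is immediate: for any F-stable secondary component $W_j$ with $x\in\p_j$, F-stability gives $x^{p^e-1}F^e(W_j)\subseteq x^{p^e-1}W_j$, and the latter vanishes for $e\gg 0$ since $x$ is nilpotent on $W_j$; injectivity of $x^{p^e-1}F^e$ then forces $W_j=0$. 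This one-line use of the twist replaces the entire ``main obstacle'' paragraph of your proposal.
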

\begin{proof}
We prove by induction on $i \geq -1$ that multiplication by $x$ is surjective on $H^i_\m(R)$ and that $x^{p^e-1}F^e$ is injective on $H^i_\m(R)$ for all $e>0$. This will conclude the proof, since the first assertion implies $x$ is a surjective element and the second assertion implies $F$ is injective on $H^i_\m(R)$ for all $i$. The base case $i=-1$ is trivial. Suppose both assertions hold for $i-1$; we show them for $i$. Consider the following commutative diagram:
\[\xymatrix{
0 \ar[r]  & H_\m^{i-1}(R/(x)) \ar[d]^{F^e} \ar[r] & H_\m^i(R) \ar[r]^{\cdot x} \ar[d]^{x^{p^e-1}F^e} & H_\m^i(R) \ar[r] \ar[d]^{F^e} & H_\m^i(R/(x)) \ar[d]^{F^e} \ar[r] & \ldots \\
0 \ar[r] & H_\m^{i-1}(R/(x)) \ar[r] & H_\m^i(R) \ar[r]^{\cdot x}  & H_\m^i(R) \ar[r]  & H_\m^i(R/(x)) \ar[r] & \ldots
}\]
where injectivity on the left of the rows follows from our inductive hypotheses. Let $u \in \soc(H^i_\m(R)) \cap \ker(x^{p^e-1}F^e)$. Then $xu=0$, and thus $u$ is the image of an element $v \in H^{i-1}_\m(R/(x))$. Chasing the diagram shows that $F^e(v)=0$. But since $R/(x)$ is F-injective, $F^e$ is injective on $H^{i-1}_\m(R/(x))$ for all $e>0$, so $v=0$ and thus $u=0$. This shows that $x^{p^e-1}F^e$ is injective on $H^i_\m(R)$ for all $e>0$.

It remains to show that multiplication by $x$ is surjective on $H^i_\m(R)$.  
Let $H_\m^i(R)=\sum W_j$ be the secondary representation that satisfies the conditions of the theorem (note that $H_\m^d(R)$ always has an F-stable secondary representation by Lemma \ref{Lem.FrobArtinian}). If there exists $W_j\neq 0$ whose attached prime $\p_j \in \VV(x)$, then it follows from the assumptions that $W_j$ is F-stable. Thus $x^{p^e-1}F^e(W_j)\subseteq x^{p^e-1}W_j = 0$ for all $e \gg 0$ (since $x\in \p_j = \sqrt{\ann_R(W_j)}$). However, we have proved that $x^{p^e-1}F^e$ is injective on $H^i_\m(R)$ for all $e>0$, this implies $W_j=0$ and we arrive at a contradiction. Therefore $x\notin\p$ for all $\p\in \Att_R(H^i_\m(R))$, i.e., multiplication by $x$ is surjective on $H^i_\m(R)$.
\end{proof}

\begin{Corollary} \label{Corollary minimal} Let $(R,\m)$ be a $d$-dimensional local ring of characteristic $p>0$ and let $x\in \m$ be a nonzerodivisor on $R$. Suppose that $\Att_R(H_\m^i(R)) \cap \VV(x)  \subseteq \MinAtt_R(H_\m^i(R))$ for all $i \ne d$ (e.g., when each $H_\m^i(R)$ has no embedded attached primes). If $R/(x)$ is F-injective, then $x$ is a surjective element and $R$ is F-injective.
\end{Corollary}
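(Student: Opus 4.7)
The plan is to deduce this corollary directly from Theorem \ref{mainThm1} by showing that the hypothesis on attached primes forces the relevant secondary components to be F-stable, so that Lemma \ref{Lem.FrobArtinian} applies.

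Fix $i \ne d$ and choose an irredundant secondary representation $H_\m^i(R) = \sum_j W_j$ with $\p_j = \sqrt{\ann_R(W_j)}$. I would first observe that any secondary component $W_j$ whose attached prime lies in $\VV(x)$ automatically satisfies $\p_j \in \MinAtt_R(H_\m^i(R))$ by the standing assumption. Lemma \ref{Lem.FrobArtinian} then tells us that such a $W_j$ is F-stable. Therefore the chosen secondary representation of $H_\m^i(R)$ satisfies the hypothesis of Theorem \ref{mainThm1}: every secondary component whose attached prime contains $x$ is F-stable. Applying that theorem yields that $x$ is a surjective element and $R$ is F-injective.

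For the parenthetical example, if $H_\m^i(R)$ has no embedded attached primes then $\Att_R(H_\m^i(R)) = \MinAtt_R(H_\m^i(R))$, so the intersection condition holds trivially. There is no real obstacle here; the whole argument is essentially an unpacking step, and the main content has already been established in Theorem \ref{mainThm1} and Lemma \ref{Lem.FrobArtinian}.
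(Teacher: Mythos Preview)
Your proposal is correct and follows exactly the paper's approach: invoke Lemma~\ref{Lem.FrobArtinian} to see that any irredundant secondary representation satisfies the hypothesis of Theorem~\ref{mainThm1}, then apply that theorem. The paper's proof is a one-line version of what you wrote.
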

\begin{proof}
By Lemma \ref{Lem.FrobArtinian}, every irredundant secondary representation of $H_\m^i(R)$ satisfies the assumptions of Theorem \ref{mainThm1} so the conclusion follows.
\end{proof}

We next exhibit an explicit new class of rings for which deformation of F-injectivity holds. Recall that a finitely generated $R$-module $M$ is called sequentially Cohen--Macaulay if there exists a finite filtration $0=M_0\subseteq M_1\subseteq M_2\subseteq \cdots \subseteq M_n=M$ such that each $M_{i+1}/M_i$ is Cohen--Macaulay and $\dim(M_i/M_{i-1})<\dim(M_{i+1}/M_i)$. A local ring $(R,\m)$ is called sequentially Cohen--Macaulay if $R$ is sequentially Cohen--Macaulay as an $R$-module.

\begin{Corollary} \label{CorSeqCM}
Let $(R,\m)$ be a $d$-dimensional sequentially Cohen--Macaulay local ring of characteristic $p>0$ and let $x\in \m$ be a nonzerodivisor on $R$. If $R/(x)$ is F-injective, then $x$ is a surjective element and $R$ is F-injective.
\end{Corollary}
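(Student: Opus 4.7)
The plan is to reduce to Corollary~\ref{Corollary minimal} by showing that, when $R$ is sequentially Cohen--Macaulay, every local cohomology module $H^i_\m(R)$ has no embedded attached primes (in fact, all of its attached primes have the same dimension $i$).

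To carry this out, let $0 = M_0 \subsetneq M_1 \subsetneq \cdots \subsetneq M_n = R$ be the Cohen--Macaulay filtration, with $C_j := M_j/M_{j-1}$ Cohen--Macaulay of dimension $d_j$ and $d_1 < d_2 < \cdots < d_n = d$. First I would analyze the short exact sequences $0 \to M_{j-1} \to M_j \to C_j \to 0$ using the long exact sequence in local cohomology. Since $C_j$ is Cohen--Macaulay of dimension $d_j$, we have $H^k_\m(C_j) = 0$ for $k \ne d_j$, and by induction on $j$ one has $\dim M_{j-1} < d_j$, so $H^k_\m(M_{j-1}) = 0$ for $k \ge d_j$. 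Combining these facts step by step along the filtration yields an isomorphism
\[
H^{d_j}_\m(R) \;\cong\; H^{d_j}_\m(C_j)
\]
for each $j$, and $H^i_\m(R) = 0$ for $i \notin \{d_1,\dots,d_n\}$.

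Next I would identify the attached primes. For a Cohen--Macaulay module $C$ of dimension $d_j$, every associated prime of $C$ has dimension exactly $d_j$, and hence (after passing to the completion and invoking Matlis duality as in Remark~\ref{Remark Ass-Att}) every prime in $\Att_R(H^{d_j}_\m(C))$ has dimension $d_j$. Therefore every attached prime of $H^{d_j}_\m(R) \cong H^{d_j}_\m(C_j)$ has dimension $d_j$, so no two attached primes can satisfy a proper containment. In particular
\[
\Att_R(H^i_\m(R)) = \MinAtt_R(H^i_\m(R)) \qquad \text{for every } i.
\]

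With this in hand, the hypothesis of Corollary~\ref{Corollary minimal} is satisfied for every $i \ne d$, so that corollary immediately yields that $x$ is a surjective element and that $R$ is F-injective. The only nontrivial point is the structural statement about local cohomology of a sequentially Cohen--Macaulay ring; this is well-known (and closely related to work of Schenzel), and the argument sketched above via the filtration is the expected route. No further Frobenius-theoretic input is needed beyond Theorem~\ref{mainThm1} and Lemma~\ref{Lem.FrobArtinian}, which are packaged into Corollary~\ref{Corollary minimal}.
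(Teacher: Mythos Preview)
Your strategy matches the paper's: both reduce to Corollary~\ref{Corollary minimal} by showing that each $H^i_\m(R)$ has no embedded attached primes. The difference lies only in how this structural fact is established. The paper first passes to the completion (observing that sequential Cohen--Macaulayness, F-injectivity, and the surjective-element property are all preserved), and then invokes the Herzog--Sbarra characterization: $R$ is sequentially Cohen--Macaulay if and only if each $\Ext^{\dim(S)-i}_S(R,S)$ is zero or Cohen--Macaulay of dimension $i$; unmixedness of these Ext modules then yields, via Remark~\ref{Remark Ass-Att}, that $H^i_\m(R)$ has no embedded attached primes. You instead work directly with the Cohen--Macaulay filtration to obtain $H^{d_j}_\m(R)\cong H^{d_j}_\m(C_j)$ and $H^i_\m(R)=0$ otherwise, and then use that the top local cohomology of a Cohen--Macaulay module has all attached primes of the same dimension. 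Your route is more self-contained (it avoids citing the Herzog--Sbarra theorem), while the paper's is shorter by outsourcing that step.

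One minor point to tighten: your appeal to Remark~\ref{Remark Ass-Att} in the attached-primes step presupposes completeness. Either complete at the outset, as the paper does, or replace that step with the standard identity $\Att_R\bigl(H^{d_j}_\m(C_j)\bigr)=\Assh_R(C_j)$ for the top local cohomology of a finitely generated module, which holds over any Noetherian local ring and immediately gives equidimensionality of the attached primes.
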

\begin{proof}
First we observe that $R$ is sequentially Cohen--Macaulay implies $\widehat{R}$ is sequentially Cohen--Macaulay and whether $R$ is F-injective (and whether $x$ is a surjective element) is unaffected by passing to the completion. Therefore we may assume $R$ is complete and thus $R$ is a homomorphic image of a regular local ring $S$. By \cite[Theorem 1.4]{HerzogSbarraSequentiallCM}, $R$ is sequentially Cohen--Macaulay is equivalent to saying that, for each $0\leq i \leq d$, $\Ext^{\dim(S)-i} _S(R,S)$ is either zero or Cohen--Macaulay of dimension $i$. In particular, $\Ext^{\dim(S)-i} _S(R,S)$ has no embedded associated primes and hence by Remark \ref{Remark Ass-Att}, $H_\m^i(R)$ has no embedded attached primes for each $0\leq i \leq d$ , that is, $\Att_R(H^i_\m(R)) = \Min\Att_R(H^i_\m(R))$. The conclusion now follows from Corollary~\ref{Corollary minimal}.
\end{proof}

\subsection{Results on local rings with perfect residue field}
If we assume the residue field of $(R,\m)$ is perfect, then we can prove some slight stronger results. The arguments are based on appropriate modifications of the proof of Theorem~\ref{mainThm1}, together with some ideas employed in \cite[Section 5]{MaQuy}. First, we make a modification of the definition of F-stable secondary representation.

\begin{Definition} \label{Defn Fnot-stable}
Let $R$ be a ring of characteristic $p>0$ and $\m$ be a maximal ideal of $R$. Let $W$ be an $R$-module with a Frobenius action $F$. We say that $W$ admits an $\Fnot$-stable secondary representation if there exists a secondary representation $W = \sum_{i=1}^t W_i$ such that $W_i$ is F-stable for all $i$ such that $\Att_R(W_i)\neq \{\m\}$.
\end{Definition}

\begin{Theorem} \label{mainThm2}
Let $(R,\m)$ be a $d$-dimensional local ring of characteristic $p>0$ with perfect residue field, and let $x\in \m$ be a nonzerodivisor on $R$. Suppose for each $i \ne d$, $H^i_\m(R) \ne 0$ admits a secondary representation in which the secondary components whose attached primes belong to $\VV(x)\smallsetminus \{\m\}$ are F-stable (e.g., $H_\m^i(R)$ has an $\Fnot$-stable secondary representation). If $R/(x)$ is F-injective, then $x$ is a strictly filter regular element and $R$ is F-injective.
\end{Theorem}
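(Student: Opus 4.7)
The plan is to extend the inductive strategy of Theorem \ref{mainThm1}. Specifically, I will prove by induction on $i \geq 0$ that (a) $\coker(H^i_\m(R) \xrightarrow{\cdot x} H^i_\m(R))$ has finite length and (b) $x^{p^e-1}F^e$ is injective on $H^i_\m(R)$ for every $e > 0$. The base case $i = 0$ is trivial since $H^0_\m(R) = 0$ (as $x$ is a nonzerodivisor in $\m$). Once (a) and (b) are established for all $i \leq d$, (a) will yield that $x$ is strictly filter regular, and (b) will imply $F^e$ is injective on every $H^i_\m(R)$, so $R$ is F-injective.

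The new ingredient beyond the diagram chase of Theorem \ref{mainThm1} is a key lemma that makes essential use of the perfect residue field: \emph{if $V$ is a finite length $R$-module equipped with a $p$-linear Frobenius action $F$ which is injective, then $\m V = 0$ (so $V$ is a $k$-vector space) and $F\colon V \to V$ is bijective}. I will prove $\m V = 0$ by inducting on $\length(V)$: since $F(rv) = r^p F(v)$, we have $F(\m V) \subseteq \m^p V \subseteq \m V$, so $\m V$ is $F$-stable, and the inductive hypothesis gives $\m(\m V) = 0$. Thus $\m^p V \subseteq \m^2 V = 0$ for $p \geq 2$, i.e., $F(\m V) = 0$; since $F|_{\m V}$ is injective, $\m V = 0$. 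Once $V$ is a $k$-vector space, choosing a basis identifies $F$ with a map $v \mapsto A v^{(p)}$ for some matrix $A$ over $k$ (where $v^{(p)}$ denotes the vector of $p$-th powers of coordinates); injectivity forces $A$ invertible, and the perfectness of $k$ allows extraction of $p$-th roots, yielding surjectivity.

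For the inductive step at $i$, property (a) at $i-1$ gives that $K_{i-1} := H^{i-1}_\m(R)/xH^{i-1}_\m(R)$ has finite length, and the long exact sequence arising from $0 \to R \xrightarrow{\cdot x} R \to R/(x) \to 0$ embeds $K_{i-1}$ $F$-equivariantly into $H^{i-1}_\m(R/(x))$ as the kernel of the connecting map $\delta$. Since $R/(x)$ is F-injective, $F$ is injective on $K_{i-1}$, hence bijective by the key lemma. To prove (b) for $i$, I use the commutative diagram from the proof of Theorem \ref{mainThm1}: given $u \in \soc(H^i_\m(R)) \cap \ker(x^{p^e-1}F^e)$, one has $xu=0$, so $u = \delta(v)$ for some $v \in H^{i-1}_\m(R/(x))$, and the identity $\delta \circ F^e = x^{p^e-1}F^e \circ \delta$ yields $\delta(F^e(v)) = 0$, i.e., $F^e(v) \in K_{i-1}$. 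Using bijectivity of $F$ on $K_{i-1}$ together with injectivity of $F$ on $H^{i-1}_\m(R/(x))$, one successively pulls back to obtain $F^{e-1}(v), F^{e-2}(v), \ldots, v \in K_{i-1}$, giving $u = \delta(v) = 0$. Since $H^i_\m(R)$ is Artinian with essential socle, this forces $\ker(x^{p^e-1}F^e) = 0$.

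Finally, (a) for $i < d$ will follow as in the proof of Theorem \ref{mainThm1}: any component $W_j$ of the $\Fnot$-stable secondary representation of $H^i_\m(R)$ with attached prime $\p_j \in \VV(x) \smallsetminus \{\m\}$ is $F$-stable, so $x^{p^e-1} F^e(W_j) \subseteq x^{p^e-1} W_j = 0$ for $e$ sufficiently large (as some power of $\p_j$ annihilates $W_j$), and (b) then forces $W_j = 0$. For $i = d$, (a) is automatic since $H^d_\m(R/(x)) = 0$ forces multiplication by $x$ to be surjective on $H^d_\m(R)$. The main obstacle is establishing (b) at $i$ in the presence of a nontrivial $K_{i-1}$; this is where the perfect residue field is essential, precisely through the key lemma that lets us iteratively pull back through the Frobenius inside $K_{i-1}$.
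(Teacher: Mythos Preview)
Your proposal is correct and follows essentially the same approach as the paper. The only cosmetic differences are that you state and prove the key fact (an injective Frobenius action on a finite length module over a local ring with perfect residue field is bijective) explicitly, whereas the paper simply asserts it, and that you carry out the diagram chase by pulling $F^e(v)\in K_{i-1}$ back to $v\in K_{i-1}$ inside $H^{i-1}_\m(R/(x))$, while the paper equivalently passes to the quotient $H^{i-1}_\m(R/(x))/L_{i-1}$ and observes that $F^e$ remains injective there.
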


\begin{proof}
For every $i$, we let $L_i=\coker(H_\m^i(R)\xrightarrow{x}H_\m^i(R))$. We prove by induction on $i \geq -1$ that $L_i$ has finite length and that the Frobenius action $x^{p^e-1}F^e$ on $H_\m^i(R)$ is injective for all $e>0$. This will conclude the proof, since the first assertion implies $x$ is a strictly filter regular element and the second assertion implies $F$ is injective on $H^i_\m(R)$ for all $i$. The initial case $i=-1$ is trivial. Suppose both assertions hold for $i-1$; we show them for $i$. Consider the following commutative diagram:
\[\xymatrix{
0 \ar[r]  & H_\m^{i-1}(R/(x))/L_{i-1} \ar[d]^{F^e} \ar[r] & H_\m^i(R) \ar[r]^{\cdot x} \ar[d]^{x^{p^e-1}F^e} & H_\m^i(R) \ar[r] \ar[d]^{F^e} & H_\m^i(R/(x)) \ar[d]^{F^e} \ar[r] & \ldots \\
0 \ar[r] & H_\m^{i-1}(R/(x))/L_{i-1} \ar[r] & H_\m^i(R) \ar[r]^{\cdot x}  & H_\m^i(R) \ar[r]  & H_\m^i(R/(x)) \ar[r] & \ldots
}\]
Since $L_{i-1}$ has finite length, $F^e$ is injective on $H_\m^{i-1}(R/(x))$ by assumption, and $R/\m$ is perfect, we have that $F^e$ induces a bijection on $L_{i-1}\subseteq H_\m^{i-1}(R/(x))$. Thus, $F^e$ induces an injection on $H_\m^{i-1}(R/(x))/L_{i-1}$ for all $e>0$. Therefore, chasing the diagram above as in the proof of Theorem \ref{mainThm1} we know that $x^{p^e-1}F^e$ is injective on $H_\m^i(R)$ for all $e>0$.

It remains to show that $L_i$ has finite length. 
If there exists $W_j\neq 0$ whose attached prime $\p_j \in \VV(x)\smallsetminus \{\m\}$, then it follows from the assumptions that $W$ is F-stable (note that $H_\m^d(R)$ always has an F-stable secondary representation by Lemma \ref{Lem.FrobArtinian}). Thus $x^{p^e-1}F^e(W_j)\subseteq x^{p^e-1}W_j = 0$ for all $e \gg 0$ (since $x\in \p_j=\sqrt{\ann_R(W_j)}$). However, we have proved that $x^{p^e-1}F^e$ is injective on $H^i_\m(R)$ for all $e>0$, this implies $W_j=0$ and we arrive at a contradiction. Therefore $x\notin \p$ for all $\p\in \Att_R(H^i_\m(R))\smallsetminus \{\m\}$, i.e., $L_i=\coker(H_\m^i(R)\xrightarrow{x}H_\m^i(R))$ has finite length.
\end{proof}

\begin{Corollary}
\label{Corollary minimal perfect residue}
Let $(R,\m)$ be a $d$-dimensional local ring of characteristic $p>0$ with perfect residue field, and let $x\in \m$ be a nonzerodivisor on $R$. Suppose that $\Att_R(H_\m^i(R)) \cap \VV(x)  \subseteq \MinAtt_R(H_\m^i(R)) \cup \{\m\}$ for all $i \ne d$. If $R/(x)$ is F-injective, then $x$ is a strictly filter regular element and $R$ is F-injective. In particular, F-injectivity deforms if $\dim(R/\ann_R(H^i_\m(R))) \leq 1$ for all $i \ne d$ and $R/\m$ is perfect.
\end{Corollary}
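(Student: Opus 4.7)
The proof proposal is to reduce the corollary directly to Theorem \ref{mainThm2} by verifying the hypothesis on secondary representations, using Lemma \ref{Lem.FrobArtinian}.

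First I would fix, for each $i \neq d$, an irredundant secondary representation $H_\m^i(R) = \sum_{j} W_j$ with attached primes $\p_j = \sqrt{\ann_R(W_j)}$. The plan is to show that every $W_j$ whose attached prime lies in $\VV(x) \smallsetminus \{\m\}$ is F-stable. Indeed, the assumption $\Att_R(H_\m^i(R)) \cap \VV(x) \subseteq \MinAtt_R(H_\m^i(R)) \cup \{\m\}$ forces any such $\p_j$ to be a minimal attached prime of $H_\m^i(R)$, and Lemma \ref{Lem.FrobArtinian} then guarantees that the corresponding $W_j$ is F-stable with respect to the natural Frobenius action. Hence $H_\m^i(R)$ admits an $\Fnot$-stable secondary representation, and Theorem \ref{mainThm2} applies to conclude that $x$ is a strictly filter regular element and that $R$ is F-injective.

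For the ``in particular'' claim, it suffices to check that the dimension condition $\dim(R/\ann_R(H_\m^i(R))) \leq 1$ implies the set-theoretic containment above. The key fact I would invoke is that, for an Artinian module $W$, one has $\sqrt{\ann_R(W)} = \bigcap_{\p \in \Att_R(W)} \p$, so that the minimal primes of $\ann_R(H_\m^i(R))$ coincide with $\MinAtt_R(H_\m^i(R))$. Now take any $\p \in \Att_R(H_\m^i(R))$. If $\p$ is not a minimal attached prime, then $\p$ strictly contains a minimal prime over $\ann_R(H_\m^i(R))$; since $\dim(R/\ann_R(H_\m^i(R))) \leq 1$ and $R$ is local, the only such prime is $\m$. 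Thus $\Att_R(H_\m^i(R)) \subseteq \MinAtt_R(H_\m^i(R)) \cup \{\m\}$, which trivially gives the hypothesis upon intersecting with $\VV(x)$, and the first part of the corollary applies.

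There is no genuine obstacle to overcome, since Lemma \ref{Lem.FrobArtinian} already provides the F-stability for the relevant secondary components and Theorem \ref{mainThm2} delivers the conclusion. The only thing worth being careful about is the standard identification between minimal attached primes and minimal primes of the annihilator of an Artinian module, which I would cite from \cite[Section 7.2]{BrodmannSharp} rather than reprove.
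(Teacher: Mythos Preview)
Your proposal is correct and follows essentially the same route as the paper: invoke Lemma~\ref{Lem.FrobArtinian} to see that every irredundant secondary representation of $H_\m^i(R)$ satisfies the hypotheses of Theorem~\ref{mainThm2}, and then observe that $\dim(R/\ann_R(H^i_\m(R))) \leq 1$ forces $\Att_R(H_\m^i(R)) \subseteq \MinAtt_R(H_\m^i(R)) \cup \{\m\}$. Your added justification of the latter containment via the identity $\sqrt{\ann_R(W)} = \bigcap_{\p \in \Att_R(W)} \p$ is a welcome bit of detail that the paper leaves implicit.
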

\begin{proof}
By Lemma \ref{Lem.FrobArtinian}, every irredundant secondary representation of $H_\m^i(R)$ satisfies the assumptions of Theorem \ref{mainThm2} so the first conclusion follows. To see the second conclusion, it is enough to observe that when $\dim(R/\ann_R(H^i_\m(R))) \leq 1$, we have $\Att_R(H_\m^i(R))\subseteq \MinAtt_R(H_\m^i(R)) \cup \{\m\}$.
\end{proof}

\subsection{Results on $\mathbb{N}$-graded rings} For the rest of this section, we assume that $(R,\m, k)$ is an $\mathbb{N}$-graded algebra over a field $k$ of characteristic $p>0$ ($k$ is not necessarily perfect). Given a graded module $W = \bigoplus_j W_j$ and $a \in \Z$, we denote by $W(a)$ the shift of $W$ by $a$, that is, the graded $R$-module such that $W(a)_j = W_{a+j}$. In this context, when talking about a Frobenius action $F$ on a graded module $W$, we insist that $\deg(F(\eta))=p\cdot\deg(\eta)$ for all homogeneous $\eta\in W$. This is the case for the natural Frobenius action $F$ on the local cohomology modules $H_\m^i(R)$.

The goal of this subsection is to extend Theorem \ref{mainThm2} in this $\mathbb{N}$-graded setting, by removing the assumption that the residue field $k$ is perfect and by strengthening the conclusion to that $x$ is actually a surjective element.


\begin{Theorem} \label{mainThmGraded}
Let $(R,\m,k)$ be a $d$-dimensional $\mathbb{N}$-graded $k$-algebra of characteristic $p>0$ and let $x \in \m$ be a homogeneous nonzerodivisor on $R$. Suppose for each $i \ne d$, $H^i_\m(R)$ admits a secondary representation in which the secondary components whose attached primes belong to $\VV(x)\smallsetminus \{\m\}$ are F-stable (e.g., $H_\m^i(R)$ has an $\Fnot$-stable secondary representation). If $R/(x)$ is F-injective, then $x$ is a surjective element and $R$ is F-injective.
\end{Theorem}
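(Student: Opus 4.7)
The strategy is to mimic the induction of Theorem \ref{mainThm2} but strengthen the inductive conclusion from ``$L_i := \coker(H^i_\m(R) \xrightarrow{\cdot x} H^i_\m(R))$ has finite length'' to ``$L_i = 0$''. The stronger statement is exactly what upgrades ``strictly filter regular'' to ``surjective element'', and it will be enforced by the graded condition $\deg F(\eta) = p\deg(\eta)$ in place of the perfect-residue-field hypothesis used previously.

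I plan to prove by induction on $i \geq -1$ that (a) $L_i = 0$ and (b) $x^{p^e-1}F^e$ is injective on $H^i_\m(R)$ for every $e > 0$; the base $i=-1$ is vacuous. In the inductive step, because $L_{i-1}=0$ the long exact sequence collapses to an injection $\partial\colon H^{i-1}_\m(R/(x)) \hookrightarrow H^i_\m(R)$, so the diagram chase from Theorem \ref{mainThm1} applies directly: any $u \in \soc(H^i_\m(R)) \cap \ker(x^{p^e-1}F^e)$ lifts through $\partial$ to some $v$, and $\partial(F^e(v)) = x^{p^e-1}F^e(u) = 0$ combined with injectivity of $\partial$ and F-injectivity of $R/(x)$ give $v=0=u$, proving (b). For $i<d$, the $\Fnot$-stable secondary representation argument of Theorem \ref{mainThm2} kills every secondary component of $H^i_\m(R)$ with attached prime in $\VV(x)\setminus\{\m\}$, leaving $\Att_R(L_i)\subseteq\{\m\}$, so $L_i$ has finite length; for $i=d$, $L_d\hookrightarrow H^d_\m(R/(x))=0$ is automatic.

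The decisive new input is a graded degree bound. Since $H^i_\m(R)$ is Artinian graded, there exists $N$ with $H^i_\m(R)_j = 0$ for $j > N$. For a homogeneous $\eta$ of degree $j$, the element $x^{p^e-1}F^e(\eta)$ sits in degree $p^e(\delta + j) - \delta$, where $\delta = \deg(x)\geq 1$; by (b), if $\eta\ne 0$ this degree must be $\leq N$ for every $e$, which forces $\delta + j \leq 0$, i.e.\ $j \leq -\delta$. Hence $H^i_\m(R)$, and therefore $L_i$, is supported entirely in degrees $\leq -\delta < 0$. To close the induction for $i<d$: the embedding $L_i \hookrightarrow H^i_\m(R/(x))$ is $F$-equivariant (with the induced Frobenius on $L_i$), so F-injectivity of $R/(x)$ transfers injectivity of $F$ to $L_i$; but if any $(L_i)_{j_0}\ne 0$ with $j_0 < 0$, then $F^e$ would produce nonzero pieces in the infinitely many distinct degrees $p^e j_0$, contradicting finite length. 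Therefore $L_i = 0$.

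The main obstacle, compared with Theorem \ref{mainThm2}, is dispensing with the perfect-residue-field hypothesis, which is exactly what allowed the earlier proof to stop at ``finite length''; this is replaced by the elementary but decisive degree-arithmetic observation above, which both confines $H^i_\m(R)$ to strictly negative degrees and then collides with injectivity of $F$ on the finite-length graded module $L_i$ to force $L_i=0$. Once this estimate is in place the rest is a direct adaptation of Theorems \ref{mainThm1} and \ref{mainThm2}: assertion (a) for all $i$ gives that $x$ is a surjective element, and assertion (b) for all $i$ and $e$ yields F-injectivity of $R$.
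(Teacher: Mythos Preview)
Your proposal is correct and follows essentially the same route as the paper's own proof: the same induction scheme, the same diagram chase for injectivity of $x^{p^e-1}F^e$, the same $\Fnot$-stable argument to get $L_i$ of finite length, and the same two graded ingredients (injectivity of $F$ on the finite-length module $L_i$ together with the degree growth coming from $x^{p^e-1}F^e$) to force $L_i=0$. The only cosmetic difference is in how you combine the two graded ingredients: you first use (b) to bound $H^i_\m(R)$ into degrees $\leq -\delta$ and then use injectivity of $F$ on $L_i$ to exclude nonzero negative-degree pieces, whereas the paper first uses injectivity of $F$ on $L_i$ to pin $L_i$ to degree $0$ and then invokes (b) at degree $0$ to reach the contradiction.
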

\begin{proof}
Let $\deg(x)=t>0$. We have a graded long exact sequence of local cohomology, induced by the short exact sequence $0 \to R(-t) \xrightarrow{x} R \to R/(x) \to 0$. Moreover, this exact sequence fits in the commutative diagram:
\[\xymatrix{
\ldots \ar[r]  & H_\m^{i-1}(R/(x)) \ar[d]^{F^e} \ar[r] & H_\m^i(R)(-t) \ar[r]^{\cdot x} \ar[d]^{x^{p^e-1}F^e} & H_\m^i(R) \ar[r] \ar[d]^{F^e} & H_\m^i(R/(x)) \ar[d]^{F^e} \ar[r] & \ldots \\
\ldots \ar[r] & H_\m^{i-1}(R/(x)) \ar[r] & H_\m^i(R)(-t) \ar[r]^{\cdot x}  & H_\m^i(R) \ar[r]  & H_\m^i(R/(x)) \ar[r] & \ldots
}\]
Observe that all the Frobenius actions are compatible with the grading. We show by induction on $i \geq -1$ that the map $H^i_\m(R)(-t) \xrightarrow{x} H^i_\m(R)$ is surjective and that $x^{p^e-1}F^e$ is injective on $H^i_\m(R)(-t)$. This will conclude the proof, since the first assertion implies $x$ is a surjective element and the second assertion implies $F$ is injective on $H_\m^i(R)$ for all $i$. The base case $i=-1$ is trivial. Suppose both assertions hold for $i-1$; we show them for $i$. By the same argument as in the proof of Theorem~\ref{mainThm1}, we have that $x^{p^e-1}F^e$ is injective on $H^i_\m(R)(-t)$ for all $e>0$.

It remains to show that multiplication by $x$ map $H^i_\m(R)(-t) \xrightarrow{x} H^i_\m(R)$ is surjective. 
Now by the same argument as in the proof of Theorem \ref{mainThm2}, we know that $L_i=\coker(H_\m^i(R)(-t)\xrightarrow{x}H_\m^i(R))$ has finite length (note that we can ignore the graded structure here). Finally, consider the following commutative diagram:
\[\xymatrix{
0 \ar[r]  & L_i \ar[d]^-{F^e} \ar[r] & H^i_\m(R/(x)) \ar[d]^-{F^e} \ar[r] & H^{i+1}_\m(R)(-t) \ar[d]^-{x^{p^e-1}F^e} \ar[r]^-x & \ldots \\
0 \ar[r]  & L_i \ar[r] & H^i_\m(R/(x)) \ar[r] & H^{i+1}_\m(R)(-t) \ar[r]^-x & \ldots
}\]
Since $F^e$ is injective on $H^i_\m(R/(x))$ by assumption, it is also injective on $L_i$. But since the finite length module $L_i$ is graded and the Frobenius action is compatible with the grading (as the action is induced from $H^i_\m(R/(x))$), this forces $L_i$ to be concentrated in degree zero. If $L_i \ne 0$, then $[L_i]_0 \cong [H^i_\m(R)/xH^i_\m(R)(-t)]_0 \ne 0$, in particular $[H^i_\m(R)]_0 \ne 0$. However, this implies the existence of a nonzero element $u \in [H^i_\m(R)(-t)]_t$. Since we have proved that $x^{p^e-1}F^e$ is injective on $H^i_\m(R)(-t)$, this gives a nonzero element $x^{p^e-1}F^e(u)$ in degree $p^et>0$ for all $e > 0$, which is a contradiction because $[H^i_\m(R)(-t)]_{\gg 0}=0$ (here we are using that the Frobenius action $x^{p^e-1}F^e$ is compatible with the grading on $H_\m^i(R)(-t)$, that is, $\deg(x^{p^e-1}F^e(\eta))=p^e\deg(\eta)$ for all $\eta\in H_\m^i(R)(-t)$). Therefore $L_i=0$, i.e., the multiplication by $x$ map $H^i_\m(R)(-t) \xrightarrow{x} H^i_\m(R)$ is surjective.
\end{proof}

\begin{Corollary} \label{Corollary minimal graded}
Let $(R,\m,k)$ be a $d$-dimensional $\mathbb{N}$-graded $k$-algebra of characteristic $p>0$ and let $x \in \m$ be a homogeneous nonzerodivisor on $R$. Suppose that $\Att_R(H_\m^i(R)) \cap \VV(x)  \subseteq \MinAtt_R(H_\m^i(R)) \cup \{\m\}$ for all $i \ne d$ (e.g., $x$ is a strictly filter regular element). If $R/(x)$ is F-injective, then $x$ is a surjective element and $R$ is F-injective.
\end{Corollary}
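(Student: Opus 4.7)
The plan is to deduce this corollary as a direct consequence of Theorem \ref{mainThmGraded}, in exactly the same way Corollary \ref{Corollary minimal perfect residue} follows from Theorem \ref{mainThm2}. The task reduces to verifying that, under the given hypothesis on the attached primes, every irredundant secondary representation of $H^i_\m(R)$ automatically has all the components one needs to control be F-stable.

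More precisely, first I would fix, for each $i\ne d$, an irredundant secondary representation $H^i_\m(R)=\sum_j W_j$ with $\p_j=\sqrt{\ann_R(W_j)}$. The hypothesis $\Att_R(H_\m^i(R)) \cap \VV(x)  \subseteq \MinAtt_R(H_\m^i(R)) \cup \{\m\}$ says that any $\p_j$ which contains $x$ is either the maximal ideal $\m$ or a minimal element of $\Att_R(H^i_\m(R))$. The components with $\p_j=\m$ have no constraint in the hypothesis of Theorem \ref{mainThmGraded}, and the components with $\p_j\in\MinAtt_R(H_\m^i(R))$ are F-stable by Lemma \ref{Lem.FrobArtinian}. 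Hence the secondary components whose attached primes lie in $\VV(x)\setminus\{\m\}$ are all F-stable, which is precisely the hypothesis needed to invoke Theorem \ref{mainThmGraded}.

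Once that verification is in place, Theorem \ref{mainThmGraded} immediately gives that $x$ is a surjective element and that $R$ is F-injective, finishing the proof. There is no substantive obstacle here: all the real analytic work (the diagram chase, the grading argument forcing $L_i=0$, and the inductive injectivity of $x^{p^e-1}F^e$) has already been carried out in the theorem, and all the secondary-representation bookkeeping has been done in Lemma \ref{Lem.FrobArtinian}. The corollary is essentially a repackaging in terms of attached primes rather than in terms of an explicit F-stable representation.
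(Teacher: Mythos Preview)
Your proposal is correct and follows exactly the paper's approach: the paper's proof is the one-line observation that, by Lemma \ref{Lem.FrobArtinian}, every irredundant secondary representation of $H_\m^i(R)$ satisfies the hypotheses of Theorem \ref{mainThmGraded}, so the conclusion follows.
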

\begin{proof}
By Lemma \ref{Lem.FrobArtinian}, every irredundant secondary representation of $H_\m^i(R)$ satisfies the assumptions of Theorem \ref{mainThmGraded} so the conclusion follows.
\end{proof}

\section{Ending questions and Remarks}

We end by collecting some questions that arise from the results in this article. Motivated by Definition \ref{Defn F-stable} and Theorem \ref{mainThm1}, it is natural to ask the following.
\begin{Question} \label{q1}
Let $(R,\m)$ be a local ring of characteristic $p>0$. If $H^i_\m(R) \ne 0$, does it admit an F-stable secondary representation?
\end{Question}

By Theorem \ref{mainThm1}, a positive answer to Question \ref{q1} implies that F-injectivity deforms.

\begin{Question} \label{q1maximal}
Let $(R,\m)$ be a local ring of characteristic $p>0$. If $H^i_\m(R)\ne 0$, does it admit a secondary representation such that the secondary component with attached prime $\m$, if not zero, is F-stable?
\end{Question}

This is weaker than Question \ref{q1}, but an affirmative answer also implies that F-injectivity deforms. Suppose $R/(x)$ is F-injective, we will show $x$ is a surjective element and thus $R$ is F-injective by \cite[Theorem 3.7]{HMS} (or use the same argument as in Theorem \ref{mainThm1}). In fact, if $x\in\p$ for some $\p\in \Att_R(H^i_\m(R))$, then $x\in \p R_\p\in\Att_{R_\p}(H^j_{\p R_\p}(R_\p))$ for some $j$ and $R_\p/xR_\p$ is still F-injective. Now an affirmative answer to Question \ref{q1maximal} applied to $(R_\p, \p R_\p)$ implies that there exists a nonzero secondary component of $H^j_{\p R_\p}(R_\p)$ with attached prime $\p R_\p$ that is F-stable, and we can argue as in the proof of Theorem \ref{mainThm1} to arrive at a contradiction.


\begin{Question} \label{q3}
Let $(R,\m)$ be a local ring of characteristic $p>0$, and let $x \in \m$ be a nonzerodivisor on $R$. If $R/(x)$ is F-injective, is it true that $\m \notin \Att(H^i_\m(R))$ for all $i$?
\end{Question}

Similar to the discussion above, we point out that an affirmative answer to Question \ref{q3} also implies that $x$ is a surjective element (and hence implies that F-injectivity deforms): if not, then $x\in\p$ for some $\p\in \Att_R(H^i_\m(R))$, but then $R_\p/xR_\p$ is still F-injective and $\p R_\p\in\Att_{R_\p}(H^j_{\p R_\p}(R_\p))$ for some $j$, which contradicts Question \ref{q3} for $(R_\p, \p R_\p)$.

\bibliographystyle{alpha}
\bibliography{References}

\end{document}